\theoremstyle{plain}
 \newtheorem{theorem}{Theorem}[section]
 \newtheorem*{theorem*}{Theorem}
 \newtheorem{lemma}[theorem]{Lemma}
\theoremstyle{definition}
\theoremstyle{remark}
 \newtheorem{remark}[theorem]{Remark}
\numberwithin{equation}{section}
\begin{document}
\title{On $Q$-polynomial association schemes of small class}

\author{Sho Suda\\
{\small Division of Mathematics, Graduate School of Information Sciences,} \\
{\small  Tohoku University,} \\
{\small 6-3-09 Aramaki-Aza-Aoba, Aoba-ku, Sendai 980-8579, Japan}}

\date{\today}

\maketitle

%%%%%%%%%%%%%%%%%%%%%%%%%%%%%%%%%%%%%%%%%%%%%%%%%%%%%%%%%%%%%%%%%%%%%%%%
\begin{abstract}
We show an inequality involving the third largest or second smallest dual eigenvalues of $Q$-polynomial association schemes of class at least three.
Also we characterize dual-tight $Q$-polynomial association schemes of class three.
Our method is based on tridiagonal matrices and can be applied to distance-regular graphs as well. 
\end{abstract}
%%%%%%%%%%%%%%%%%%%%%%%%%%%%%%%%%%%%%%%%%%%%%%%%%%%%%%%%%%%%%%%%%%%%%%%%%%
\section{Introduction}
$Q$-polynomial association schemes are defined by Delsarte in \cite{Delsarte} as a framework to study design theory uniformly, and
are studied in the last two decades 
from the viewpoints of structure theory \cite{CS,S1,S2,TT}, imprimitive cases \cite{DMM,LMO,MMW}, the dual version of Bannai-Ito conjecture \cite{MW}, hemisystems \cite{PW}, spherical designs \cite{Suda}. 

This concept is regarded as a dual object to distance-regular graphs (equivalently $P$-polynomial association schemes).
Many examples of $Q$-polynomial association schemes that are neither $P$-polynomial nor duals of translation $P$-polynomial association schemes are obtained from spherical designs \cite{BB,DGS}.
Small class $Q$-polynomial association schemes are attached to several combinatorial objects: linked systems of symmetric designs for $3$ class $Q$-antipodal case \cite{Dam,MMW,Suda1}, certain equiangular line sets for $3$ class $Q$-bipartite case \cite{MMW,Suda1}, real mutually unbiased bases for $4$ class, $Q$-antipodal and $Q$-bipartite case \cite{ABS,LMO,Suda1}.
Thus $Q$-polynomial association schemes of small class are of particular interest to research.  
The aim of this paper is to pursue this research direction further.

It was proven in \cite{KPY} that for a distance-regular graph of diameter $D\geq 2$ and distinct eigenvalues $k=\theta_0>\theta_1>\cdots>\theta_D$, the following inequality holds:
\begin{align}\label{eq:kpy}
(\theta_1+1)(\theta_D+1)\leq -b_1,
\end{align} 
here we use the standard notation of distance-regular graphs, see \cite{BCN}.
Moreover equality holds if and only if the diameter is two, meaning the graph is strongly regular. 

In \cite{JKT}, the following ``fundamental bound" for distance-regular graphs of diameter $D$ was given;
\begin{align}\label{eq:01}
\Big(\theta_1+\frac{k}{a_1+1}\Big)\Big(\theta_D+\frac{k}{a_1+1}\Big)\geq -\frac{k a_1 b_1}{(a_1+1)^2}.
\end{align}
A distance-regular graph is {\it tight} if it is nonbipartite and equality holds in \eqref{eq:01}.
Tight distance-regular graphs have been extensively studied in several papers, e.g. \cite{GT,JK,Pascasio1999JAC,Pascasio2003DM}.
In particular, 
Juri\v{s}i\'{c} and Koolen showed the following characterization in \cite[Theorem 3.2]{JK}: 
A nonbipartite distance-regular graph of diameter three is tight if and only if it is a Taylor graph.

Our main results are 
 Theorem~\ref{thm:41}, the dual result to \eqref{eq:kpy}, and
 Theorem~\ref{thm:51}, the dual result to \cite[Theorem 3.2]{JK}.

One of the methods to study $Q$-polynomial association schemes is investigation of the tridiagonal matrix of the first Krein matrix.
An advantage of the above is to give a unifying way to study distance-regular graphs as well as $Q$-polynomial association schemes.
In the present paper, we demonstrate how results on tridiagonal matrices derive a unifying proof of results both for distance-regular graphs and for $Q$-polynomial association schemes.
The original proofs of \eqref{eq:kpy} and \cite[Theorem 3.2]{JK} are based on combinatorial methods, but our method is based only on tridiagonal matrices obtained from the first intersection matrix or the first Krein matrix. 
It implies that our way presents alternative proofs of these results for distance-regular graphs.   
In fact our method yields new inequalities for eigenvalues of regular or distance-regular graphs in Theorems~\ref{thm:ineqgraph}, \ref{thm:31}.

%%%%%%%%%%%%%%%%%%%%%%%%%%%%%%%%%%%%%%%%%%%%%%%%%%%%%%%%%%%%%%%%%%%%%%%%
\section{Preliminaries}
\subsection{Eigenvalues of tridiagonal matrices}
Let $D$ be a positive integer at least two.
Let $B=(b_{ij})_{0\leq i,j\leq D}$ be a nonnegative tridiagonal matrix with positive superdiagonal and subdiagonal entries of size $D+1$.
We set $\alpha_i=b_{ii}$ for $0\leq i\leq D$, $\beta_i=b_{i,i+1}$ for $0\leq i\leq D-1$ and $\gamma_i=b_{i,i-1}$ for $1\leq i\leq D$.
We also set $\gamma_0=0$ and $\beta_D=0$.
Throughout this paper, we consider the following condition:
\begin{align}\label{eq:cond}
\alpha_0=0,\quad \gamma_1=1, \alpha_i+\beta_i+\gamma_i=\kappa (0\leq i\leq D), 
\end{align}
where $\kappa$ is a positive number.
It is well known that all eigenvalues of $B$ are distinct and real, and $\kappa$ is the largest eigenvalue.
Let $\theta_0=\kappa>\theta_1>\cdots>\theta_D$ be the eigenvalues of $B$.
By \cite[p.123]{BCN}, $\theta_1,\ldots,\theta_D$ are the eigenvalues of the $D\times D$ tridiagonal  matrix
\begin{align}\label{eq:tri}
\tilde{B}=\begin{pmatrix}
-\gamma_1&\beta_1&&&\\
\gamma_1&\kappa-\beta_1-\gamma_2&\beta_2&&\\
&\gamma_2&\ddots &\ddots &\\
&&\ddots &\ddots &\beta_{D-1}\\
&&&\gamma_{D-1}&\kappa-\beta_{D-1}-\gamma_D
\end{pmatrix}.
\end{align}
We define $F_0(x)=1$ and $F_i(x)$ to be  
the characteristic polynomial of the principal submatrix of $\tilde{B}$ consisting of the first $i $ rows and first $i$ columns, for $1\leq i\leq D$. 
Then we can easily find that $F_1(x)=x+1$ and
\begin{align*}
F_i(x)=(x-\kappa+\beta_{i-1}+\gamma_i)F_{i-1}(x)-\beta_{i-1}\gamma_{i-1}F_{i-2}(x) 
\end{align*} 
for $i=2,\ldots, D$, and thus $F_D(x)=\prod_{i=1}^D (x-\theta_i)$.
By \cite[Remark~(5), p.203]{BI}, all roots of $F_i(x)$ are real and distinct for each $1\leq i\leq D$.
For $1\leq i\leq D$, let $\alpha_{i,1}>\cdots>\alpha_{i,i}$ be the roots of $F_i(x)$.
Since $F_1(x)=x+1$, $\alpha_{1,1}=-1$. 
The polynomial $F_{i-1}(x)$ has a root in the open interval $(\alpha_{i,j+1},\alpha_{i,j})$ for each $1\leq j\leq i-1$, namely $\alpha_{i,j+1}<\alpha_{i-1,j}<\alpha_{i,j}$ holds.
The following is used to prove Theorem~\ref{thm:1}.
\begin{lemma}\label{lem:ineq}
Let $a,b,c,d$ be real numbers satisfying $a\leq b< c\leq d$, 
and define $f(x)=(x-a)(x-d)$ and $g(x)=(x-b)(x-c)$.
Then $f(t)\leq g(t)$ holds for any $t\in[b,c]$.
Moreover equality holds for some $t\in(b,c)$ if and only if $a=b$ and $c=d$. 
\end{lemma}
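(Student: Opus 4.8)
The plan is to study the difference $g(t)-f(t)$ directly. Expanding both quadratics, the $t^2$ terms cancel, so
$g(t)-f(t)=(a+d-b-c)\,t+(bc-ad)$
is an \emph{affine} function of $t$. The whole argument rests on the elementary fact that an affine function on a closed interval attains its minimum at one of the two endpoints; hence to prove $f(t)\le g(t)$ for all $t\in[b,c]$ it suffices to verify the inequality at $t=b$ and at $t=c$.

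First I would evaluate at $t=b$. Since $g(b)=0$, we get $g(b)-f(b)=-(b-a)(b-d)$. The hypothesis $a\le b$ gives $b-a\ge 0$, and $b<c\le d$ gives $b-d<0$, so $(b-a)(b-d)\le 0$ and therefore $g(b)-f(b)\ge 0$. Symmetrically, at $t=c$ we have $g(c)=0$, so $g(c)-f(c)=-(c-a)(c-d)$; here $c-a>0$ (because $a\le b<c$) and $c-d\le 0$, whence $(c-a)(c-d)\le 0$ and $g(c)-f(c)\ge 0$. Since $g-f$ is affine and nonnegative at both endpoints of $[b,c]$, it is nonnegative throughout $[b,c]$, which is the desired inequality.

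For the equality statement, suppose $g(t_0)-f(t_0)=0$ for some $t_0\in(b,c)$. An affine function that is nonnegative on $[b,c]$ and vanishes at an interior point must be identically zero (otherwise its graph, being a line, would dip below the axis on one side of $t_0$). Consequently $g(b)-f(b)=0$ and $g(c)-f(c)=0$, i.e. $(b-a)(b-d)=0$ and $(c-a)(c-d)=0$. Because $b-d\neq 0$ and $c-a\neq 0$, these force $a=b$ and $c=d$. The converse is immediate: if $a=b$ and $c=d$ then $f=g$ identically, so equality holds at every $t\in(b,c)$.

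I do not expect a real obstacle here; the proof is a two-line endpoint check once one notices that $g-f$ is affine. The only point requiring a little care is that the equality analysis uses the \emph{strict} inequalities $b-d<0$ and $c-a>0$, both of which come from the strict hypothesis $b<c$, and it is precisely this strictness that pins down $a=b$ and $c=d$ rather than leaving a degenerate possibility.
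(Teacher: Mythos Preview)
Your proof is correct and follows essentially the same route as the paper: observe that $g-f$ is affine (the paper says ``degree one,'' which amounts to the same thing here), check nonnegativity at the two endpoints $b$ and $c$, and conclude. Your equality analysis is more explicit than the paper's terse statement, but the underlying idea---that an affine function nonnegative on $[b,c]$ with an interior zero must vanish identically, forcing $a=b$ and $c=d$ via the strict inequalities $b-d\neq 0$, $c-a\neq 0$---is exactly what the paper leaves implicit.
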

\begin{proof}
Follows from the facts that $g(x)-f(x)$ is a polynomial of degree one and 
that $f(b)\leq g(b)$, $f(c)\leq g(c)$.
\end{proof}
The following theorem shows a relation between eigenvalues of $B$ and entries of $B$.
\begin{theorem}\label{thm:1}
Let $D$ be a positive integer at least two and $B$ a $(D+1)\times (D+1)$ tridiagonal matrix satisfying \eqref{eq:cond}.
Let $\theta_0=\kappa>\theta_1>\cdots>\theta_D$ be the eigenvalues of $B$.
\begin{enumerate}
\item $(\theta_1+1)(\theta_D+1)\leq -\beta_1$ holds with equality if and only if $D=2$.
\item Assume that $D\geq 3$ holds.
If $\beta_2+\gamma_3\geq \kappa+1$ holds, then $(\theta_1+1)(\theta_{D-1}+1)(\theta_D+1)\geq -\beta_1(\kappa+1-\beta_2-\gamma_3)$.
If $\beta_2+\gamma_3\leq \kappa+1$ holds, then $(\theta_1+1)(\theta_{2}+1)(\theta_D+1)\leq -\beta_1(\kappa+1-\beta_2-\gamma_3)$. 
Moreover equality holds in either case if and only if $D=3$. 
\end{enumerate}
\end{theorem}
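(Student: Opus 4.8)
The plan is to evaluate the polynomials $F_i(x)$ at the point $x=-1$, which is the unique root of $F_1$, and then transport the resulting identities to the eigenvalues $\theta_i=\alpha_{D,i}$ of $B$ by means of the interlacing $\alpha_{i,j+1}<\alpha_{i-1,j}<\alpha_{i,j}$ and Lemma~\ref{lem:ineq}. Evaluating the three-term recursion at $x=-1$ and using $\gamma_1=1$, $F_1(-1)=0$ gives $F_2(-1)=-\beta_1$ and $F_3(-1)=\beta_1(\kappa+1-\beta_2-\gamma_3)$; since $F_i(x)=\prod_{j=1}^i(x-\alpha_{i,j})$, these become $(\alpha_{2,1}+1)(\alpha_{2,2}+1)=-\beta_1$ and $(\alpha_{3,1}+1)(\alpha_{3,2}+1)(\alpha_{3,3}+1)=-\beta_1(\kappa+1-\beta_2-\gamma_3)$, so the right-hand sides of both assertions are already products over roots of $F_2$, resp. $F_3$. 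From $\alpha_{1,1}=-1$ interlacing yields $\alpha_{2,2}<-1<\alpha_{2,1}$ and $\alpha_{3,3}<-1<\alpha_{3,1}$, and it also gives the orderings I will need: for fixed $j$ the root $\alpha_{i,j}$ increases with $i$, the last root $\alpha_{i,i}$ decreases with $i$, and $\alpha_{D,D-1}\le\alpha_{3,2}$.

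For (1): the above gives $\theta_D=\alpha_{D,D}\le\alpha_{2,2}<-1<\alpha_{2,1}\le\alpha_{D,1}=\theta_1$, so Lemma~\ref{lem:ineq} applied with $a=\theta_D$, $b=\alpha_{2,2}$, $c=\alpha_{2,1}$, $d=\theta_1$ and $t=-1$ yields $(\theta_1+1)(\theta_D+1)\le(\alpha_{2,1}+1)(\alpha_{2,2}+1)=-\beta_1$. Since $-1$ lies strictly between $\alpha_{2,2}$ and $\alpha_{2,1}$, the equality clause of the lemma forces $\theta_1=\alpha_{2,1}$ and $\theta_D=\alpha_{2,2}$; the strict chain $\alpha_{D,1}>\alpha_{D-1,1}>\dots>\alpha_{2,1}$ makes the former possible only when $D=2$, and $D=2$ gives equality trivially.

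For (2): the sign of $\alpha_{3,2}+1$ coincides with that of $\kappa+1-\beta_2-\gamma_3$, and this governs the case split. If $\beta_2+\gamma_3\le\kappa+1$, so $\alpha_{3,2}\ge-1$, then using $\theta_1+1\ge\alpha_{3,1}+1>0$ and $(\theta_2+1)(\theta_D+1)\le0$ (valid since $\theta_2\ge\alpha_{3,2}\ge-1$ and $\theta_D+1<0$) I may replace $\theta_1$ by $\alpha_{3,1}$ in $(\theta_1+1)(\theta_2+1)(\theta_D+1)$, and it then suffices to prove $(\theta_2+1)(\theta_D+1)\le(\alpha_{3,2}+1)(\alpha_{3,3}+1)$, which is Lemma~\ref{lem:ineq} with outer zeros $\theta_D\le\alpha_{3,3}$, $\theta_2\ge\alpha_{3,2}$, inner zeros $\alpha_{3,3}<\alpha_{3,2}$ and $t=-1\in[\alpha_{3,3},\alpha_{3,2}]$. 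If instead $\beta_2+\gamma_3\ge\kappa+1$, so $\alpha_{3,2}\le-1$, then $-1$ lies between $\alpha_{3,2}$ and $\alpha_{3,1}$, and Lemma~\ref{lem:ineq} with outer zeros $\theta_{D-1}\le\alpha_{3,2}$, $\theta_1\ge\alpha_{3,1}$ gives $(\theta_1+1)(\theta_{D-1}+1)\le(\alpha_{3,1}+1)(\alpha_{3,2}+1)\le0$; multiplying through by $\theta_D+1\le\alpha_{3,3}+1<0$ reverses this into $(\theta_1+1)(\theta_{D-1}+1)(\theta_D+1)\ge(\alpha_{3,1}+1)(\alpha_{3,2}+1)(\alpha_{3,3}+1)$. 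For the equality statement, when $\kappa+1\neq\beta_2+\gamma_3$ the point $-1$ is strictly interior to the relevant interval and the final multiplication is by a factor of strict sign, so equality forces $\theta_D=\alpha_{3,3}=\alpha_{D,D}$, hence $D=3$ by the strict chain $\alpha_{D,D}<\dots<\alpha_{3,3}$; when $\kappa+1=\beta_2+\gamma_3$ the right-hand side is $0$ and the assertion reduces to $\theta_2+1\ge0$, resp. $\theta_{D-1}+1\le0$, with equality iff $\theta_2=\alpha_{3,2}$, resp. $\theta_{D-1}=\alpha_{3,2}$, again forcing $D=3$; conversely, if $D=3$ then $\theta_i=\alpha_{3,i}$ for every $i$ and equality holds at every step.

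The main obstacle I anticipate is bookkeeping rather than conceptual difficulty: in each branch of the case split one must check that $-1$ lands in the correct subinterval determined by the roots of $F_3$ so that Lemma~\ref{lem:ineq} is applicable, and that every multiplication or cancellation by a factor of definite sign preserves the claimed direction of the inequality. The boundary case $\kappa+1=\beta_2+\gamma_3$, where $\alpha_{3,2}=-1$ and a factor vanishes, must be isolated separately in the equality analysis.
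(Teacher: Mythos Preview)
Your proof is correct and follows essentially the same route as the paper: evaluate $F_2$ and $F_3$ at $x=-1$, use the interlacing $\alpha_{i,j+1}<\alpha_{i-1,j}<\alpha_{i,j}$ to compare $\theta_1,\theta_2,\theta_{D-1},\theta_D$ with the roots $\alpha_{2,j}$ and $\alpha_{3,j}$, apply Lemma~\ref{lem:ineq} at $t=-1$, and finish with a multiplication by a factor of definite sign. The only difference is that your equality analysis for part~(2) is more explicit than the paper's --- you separately isolate the boundary case $\kappa+1=\beta_2+\gamma_3$ (where $\alpha_{3,2}=-1$ and $-1$ is not strictly interior to the relevant interval), whereas the paper summarizes the equality discussion in a single sentence.
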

\begin{proof}
(1):
Applying Lemma~\ref{lem:ineq} to $(a,b,c,d)=(\theta_D,\alpha_{2,2},\alpha_{2,1},\theta_1)$, $f(x)=(x-\theta_1)(x-\theta_D)$ and $g(x)=F_2(x)$, $f(t)\leq g(t)$ holds for any $t\in [\alpha_{2,2},\alpha_{2,1}]$.  
In particular, by $\alpha_{2,2}<\alpha_{1,1}=-1<\alpha_{2,1}$, $f(-1)\leq g(-1)$ i.e., $(\theta_1+1)(\theta_D+1)\leq-\beta_1$ holds. 

Moreover $(\theta_1+1)(\theta_D+1)=-\beta_1$ holds if and only if $\theta_1=\alpha_{2,1}$ and $\theta_D=\alpha_{2,2}$ hold by Lemma~\ref{lem:ineq}. 
This is equivalent to  
$F_2(x)=F_D(x)$ i.e., $D=2$.

(2):
Assume that $\beta_2+\gamma_3\geq \kappa+1$ holds.
This condition is equivalent to $F_3(-1)\leq0$, namely $\alpha_{3,2}\leq -1\leq \alpha_{3,1}$.
Using Lemma~\ref{lem:ineq} for $(a,b,c,d)=(\theta_{D-1},\alpha_{3,2},\alpha_{3,1},\theta_1)$,  $f(x)=(x-\theta_1)(x-\theta_{D-1})$ and $g(x)=(x-\alpha_{3,1})(x-\alpha_{3,2})$, $f(t)\leq g(t)$ holds for any $t\in [\alpha_{3,2},\alpha_{3,1}]$.
In particular, $(\theta_1+1)(\theta_{D-1}+1)\leq (\alpha_{3,1}+1)(\alpha_{3,2}+1)$ holds.
From $\theta_D\leq \alpha_{3,3}<-1$ we have $(\theta_1+1)(\theta_{D-1}+1)(\theta_D+1)\geq (\alpha_{3,1}+1)(\alpha_{3,2}+1)(\alpha_{3,3}+1)=-F_3(-1)=-\beta_1(\kappa+1-\beta_2-\gamma_3)$.
The statement under the assumption $\beta_2+\gamma_3\leq \kappa+1$ can be similarly proven. 

Equality holds in either case if and only if $\theta_1=\alpha_{3,1}, \theta_D=\alpha_{3,3}$, namely $D=3$.
\end{proof}

\subsection{Graphs}
Let $\Gamma$ be a connected simple $k$-regular graph with vertex set $V(\Gamma)$ and edge set $E(\Gamma)$.
We denote the adjacency matrix of $\Gamma$ by $A$ and
let $\theta_0=k>\theta_1>\cdots>\theta_D$ be the distinct eigenvalues of $A$ in descending ordering. 
Let $\partial$ be the path-length distance on $\Gamma$.
Assume $\Gamma$ is neither complete nor empty. 
Fix a vertex $x\in V(\Gamma)$, we define  
\begin{align*}
\Gamma_i(x)=\{y\in V(\Gamma) \mid \partial(x,y)=i\} 
\end{align*}
for $0\leq i\leq D_x$, where $D_x=\max\{\partial(x,y) \mid  y\in V(\Gamma)\}$.
The diameter of $\Gamma$ is defined to be $\max\{D_x\mid x\in V(\Gamma)\}$.
Then the graph $\Gamma$ has a distance partition $\pi(x)$ with respect to $x$ i.e.,  $\pi(x)=\{\Gamma_0(x),\Gamma_1(x),\ldots,\Gamma_{D_x}(x)\}$.
Let the characteristic matrix $S=S_x$ be the $|V(\Gamma)|\times (D_x+1)$ matrix with  $i$-th column as the 
characteristic vector of $\Gamma_i(x)$ for $0\leq i\leq D_x$. 
We define the quotient matrix $B=B(x)$ of $A$ with respect to $\pi(x)$ as $S^T S B= S^T A S$.
Note that the matrix $B$ is a nonegative tridiagonal matrix with positive superdiagonal and subdiagonal entries.
The entries of $B$ are denoted by $\alpha_i(x), \beta_i(x), \gamma_i(x)$.

Since the graph $\Gamma$ is $k$-regular, 
the quotient matrix $B$ satisfies the condition \eqref{eq:cond}.
We will then use the matrix $\tilde{B}$ defined in \eqref{eq:tri} to obtain a result for graphs in the next section.

For a vertex $x\in V(\Gamma)$, a graph $\Gamma$ is called {\it distance-regular around  $x$} if the numbers  $\gamma_i(x,y):=|\Gamma_{i-1}(x)\cap\Gamma_1(y)|,\alpha_i(x,y):=|\Gamma_{i}(x)\cap\Gamma_1(y)|,\beta_i(x,y):=|\Gamma_{i+1}(x)\cap\Gamma_1(y)|$ depend only on $x$ and the distance $i=\partial(x,y)$, not on the particular choice of $y\in \Gamma_i(x)$, for $0\leq i \leq D_x$.
The graph $\Gamma$ is called {\it distance regularised} if $\Gamma$ is distance-regular around all vertices in $\Gamma$.
The distance regularised graph is {\it distance-regular} if the parameters $\gamma_i(x,y),\alpha_i(x,y),\beta_i(x,y)$ depend only on $i=\partial(x,y)$, not on $x$ nor $y$.
A distance-regular graph of diameter two is called {\it strongly regular}.
The graph $\Gamma$ is called {\it distance-biregular} if the graph $\Gamma$ is distance-regularised, bipartite and the vertices in the same color class have the same intersection array.

It was proven in \cite{GS} that a distance regularized graph $\Gamma$ is either distance-regular or distance-biregular.
If the valencies on each bipartition are equal for a distance biregular graph, then it is distance-regular, see \cite[Lemma 1]{D}.
We will use the following lemma by Haemers.
\begin{lemma}\label{lem:DRG}(See \cite[Corollary 2.3,Theorem 7.3]{H})
Let $\Gamma$ be a connected regular graph having distinct eigenvalues $\theta_0>\theta_1>\cdots>\theta_D$ and let $B$ be the quotient matrix of the distance partition with respect to a vertex $x\in V(\Gamma)$ having distinct eigenvalues $\tau_0>\tau_1>\cdots>\tau_{D_x}$.
\begin{enumerate}
\item The eigenvalues of $B$ interlace the eigenvalues of $A$. In particular $\theta_1\geq \tau_1$ and $\tau_{D_x}\geq \theta_D$.
\item If $\theta_1=\tau_1$ and $\theta_D=\tau_{D_x}$ hold, then $\Gamma$ is distance-regular around $x$.   
\end{enumerate}
\end{lemma}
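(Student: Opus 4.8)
The plan is to recapitulate Haemers' argument, deducing both parts from Cauchy's interlacing theorem and its equality case after replacing the non‑symmetric quotient matrix $B$ by a conjugate symmetric matrix. Set $\Delta=S^TS$; since $S$ is the characteristic matrix of the distance partition, $\Delta$ is the diagonal matrix with entries $|\Gamma_0(x)|,\ldots,|\Gamma_{D_x}(x)|$, all positive, so $\tilde S:=S\Delta^{-1/2}$ has orthonormal columns and $\mathrm{colspace}(\tilde S)=\mathrm{colspace}(S)=:\mathcal S$, a subspace of dimension $D_x+1$. Put $C:=\tilde S^TA\tilde S$. From $S^TSB=S^TAS$ one gets $B=\Delta^{-1/2}C\Delta^{1/2}$, so $B$ and the symmetric matrix $C$ have the same spectrum $\tau_0>\tau_1>\cdots>\tau_{D_x}$; moreover $C$ is tridiagonal with positive off‑diagonal entries (connectedness of $\Gamma$), consistent with the eigenvalues being simple. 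Finally, $\mathbf 1=\sum_i\chi_i\in\mathcal S$, so writing $\mathbf 1=\tilde Su_0$ and using $A\mathbf 1=k\mathbf 1$ gives $Cu_0=ku_0$; since $C\geq 0$ is irreducible, Perron–Frobenius yields $\tau_0=k=\theta_0$.

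For part (1): $C$ is the compression of the symmetric matrix $A$ to the $(D_x+1)$‑dimensional subspace $\mathcal S$, so Cauchy's interlacing theorem applies. Writing $\lambda_1\geq\cdots\geq\lambda_n$ for the eigenvalues of $A$ with multiplicity, it gives $\lambda_j\geq\tau_{j-1}\geq\lambda_{j+n-D_x-1}$ for $1\leq j\leq D_x+1$, i.e. the eigenvalues of $B$ interlace those of $A$. Because $\Gamma$ is connected, $\theta_0=k$ has multiplicity one, so $\lambda_2=\theta_1$; the case $j=2$ then gives $\theta_1\geq\tau_1$, and the case $j=D_x+1$ gives $\tau_{D_x}\geq\lambda_n=\theta_D$.

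For part (2): I would first reduce to showing $\mathcal S$ is $A$‑invariant, equivalently $AS=SB$. Indeed, reading the tridiagonal matrix $B$ off the identity $AS=SB$ yields a three‑term recursion expressing $A\chi_i$ as a fixed linear combination of $\chi_{i-1},\chi_i,\chi_{i+1}$, which is exactly the statement that $\gamma_i(x,y),\alpha_i(x,y),\beta_i(x,y)$ depend only on $i=\partial(x,y)$, i.e. that $\Gamma$ is distance‑regular around $x$ (and then, as noted in Section~2.2, $B$ satisfies \eqref{eq:cond}). To get $A$‑invariance I would lift the extremal eigenvectors of $C$: if $Cu=\tau u$ and $v:=\tilde Su$, then with $P:=\tilde S\tilde S^T$ the orthogonal projector onto $\mathcal S$ one has $PAv=\tilde SCu=\tau v$, hence $v^TAv=\tau v^Tv$. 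Taking $\tau=\tau_1$, whose eigenvector $v_1$ is orthogonal to $\mathbf 1$ (eigenvectors of $C$ for distinct eigenvalues are orthogonal, and $\tilde Su_0=\mathbf 1$), and using $\theta_1=\tau_1=\max\{\,v^TAv/v^Tv:0\neq v\perp\mathbf 1\,\}$ forces $v_1$ to attain this maximum, so $Av_1=\theta_1v_1$; similarly $\theta_D=\tau_{D_x}=\min_{v\neq 0}v^TAv/v^Tv$ forces $Av_{D_x}=\theta_Dv_{D_x}$, and of course $A\mathbf 1=\theta_0\mathbf 1$. It then remains to see that matching the second‑largest and the smallest eigenvalue makes the interlacing ``tight'' in Haemers' sense, so that every eigenvector of $C$ lifts to an eigenvector of $A$ and $A\mathcal S\subseteq\mathcal S$; this is the content of \cite[Corollary 2.3, Theorem 7.3]{H}.

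The main obstacle is exactly this last step. The Rayleigh‑quotient extremality above only directly controls the eigenvectors of $C$ for $\tau_0$, $\tau_1$ and $\tau_{D_x}$; propagating $A$‑invariance to all of $\mathcal S$ — equivalently, showing that the eigenvectors of $C$ for the intermediate eigenvalues $\tau_2,\ldots,\tau_{D_x-1}$ also lift — requires the tight‑interlacing argument of \cite{H}, which exploits that $C$ is a Jacobi matrix pinned to $A$ at both the top and the bottom of the spectrum. Everything else is bookkeeping.
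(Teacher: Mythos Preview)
The paper gives no proof of this lemma at all; it merely cites Haemers \cite[Corollary 2.3, Theorem 7.3]{H} and moves on. Your proposal therefore goes well beyond the paper's treatment by actually sketching Haemers' argument.

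Your part (1) is a correct and complete rendition of the standard interlacing argument: the symmetrisation $C=\Delta^{1/2}B\Delta^{-1/2}=\tilde S^TA\tilde S$ is set up properly, Cauchy interlacing is applied correctly, and the use of connectedness to identify $\lambda_2=\theta_1$ is right.

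For part (2), your reduction to the equitability condition $AS=SB$ is correct, and the Rayleigh-quotient lifting of the eigenvectors for $\tau_0,\tau_1,\tau_{D_x}$ is valid. You then honestly flag the remaining difficulty: three lifted eigenvectors do not by themselves make the whole $(D_x{+}1)$-dimensional space $\mathcal S$ $A$-invariant, and ``tight interlacing'' in the usual sense (a block of consecutive equalities) is not what you have here. You defer this step to \cite{H}, which is exactly what the paper does. So your write-up is at least as complete as the paper's, and the gap you identify is real --- closing it genuinely requires the distance-partition-specific argument in Haemers' Theorem~7.3, not just generic tight interlacing.
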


%%%%%%%%%%%%%%%%%%%%%%%%%%%%%%%%%%%%%%%%%%%%%%%%%%%%%%%%%%%%%%%%%%%%%%%%%%%%%%%%%%%%%%%%%%%%%%%%%%%
\section{Inequalities for eigenvalues of $k$-regular graphs}
Let $\Gamma$ be a regular, connected simple graph with valency $k$ with the adjacency matrix $A$ and the quotient matrix $B=B(x)$ of the distance partition $\pi=\{\Gamma_0(x),\Gamma_1(x),\ldots,\Gamma_{D_x}(x)\}$ for any $x\in V(\Gamma)$.

Let $\theta_0=k>\theta_1>\cdots>\theta_D$ be the distinct eigenvalues of $A$, and 
let $\tau_0>\tau_1 > \cdots > \tau_{D_x}$ be the eigenvalues of $B(x)$.
Since the graph $\Gamma$ is assumed to be $k$-regular, 
the quotient matrix $B(x)$ has the largest eigenvalue $k$.

By Theorem~\ref{thm:1}, we have $(\tau_1+1)(\tau_{D_x}+1)\leq -\beta_1(x)$.
Applying Lemma~\ref{lem:ineq} for $(a,b,c,d)=(\theta_D,\tau_{D_x},\tau_1,\theta_1)$ again, we have 
\begin{align}\label{eq:32}
(\theta_1+1)(\theta_D+1)\leq -\beta_1(x).
\end{align}

If equality is attained in \eqref{eq:32} for each $x\in V(\Gamma)$,
then $\tau_1=\theta_1$, $\tau_{D_x}=\theta_D$ and $D_x=2$ for each $x\in V(\Gamma)$. 
In particular the diameter of $\Gamma$ is two.
By Lemma~\ref{lem:DRG}, $\Gamma$ is distance-regular around all vertices in $V(\Gamma)$ with the same valency.
Therefore the graph $\Gamma$ is strongly regular.

Conversely when $\Gamma$ is strongly regular, it is easy to see that equality holds in \eqref{eq:32}. 
Therefore we have the following theorem.
\begin{theorem}\label{thm:ineqgraph}   
Let $\Gamma$ be a connected regular graph 
and let $\theta_0=k>\theta_1>\cdots>\theta_D$ be the distinct eigenvalues of $\Gamma$.
Then $(\theta_1+1)(\theta_D+1)\leq -\beta_1(x)$ holds for any vertex $x\in V(\Gamma)$.
Equality holds for all vertices if and only if $\Gamma$ is strongly regular.
\end{theorem}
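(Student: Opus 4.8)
The plan is to reduce the statement to the purely linear-algebraic Theorem~\ref{thm:1}(1) by passing through the quotient matrix $B(x)$ of the distance partition around a vertex, and then to use Haemers interlacing (Lemma~\ref{lem:DRG}) together with the monotonicity estimate (Lemma~\ref{lem:ineq}) to transfer the inequality from the eigenvalues $\tau_i$ of $B(x)$ to the eigenvalues $\theta_i$ of $A$. Concretely, fix $x\in V(\Gamma)$. Since $\Gamma$ is $k$-regular, connected and non-complete, $B(x)$ is a $(D_x+1)\times(D_x+1)$ nonnegative tridiagonal matrix with positive super- and sub-diagonal entries, $D_x\geq 2$, and it satisfies \eqref{eq:cond} with $\kappa=k$: the row sums equal $k$, $\alpha_0(x)=0$, and $\gamma_1(x)=1$ because every neighbour of $x$ has exactly one neighbour in $\Gamma_0(x)$. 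Write $k=\tau_0>\tau_1>\cdots>\tau_{D_x}$ for its eigenvalues.

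First I would apply Theorem~\ref{thm:1}(1) to $B=B(x)$ to obtain $(\tau_1+1)(\tau_{D_x}+1)\leq -\beta_1(x)$, with equality if and only if $D_x=2$. Next I would transfer this to $\theta_1,\theta_D$: by Lemma~\ref{lem:DRG}(1) we have $\theta_D\leq\tau_{D_x}$ and $\tau_1\leq\theta_1$, while the root-interlacing of the polynomials $F_i$ attached to $\tilde B$ of $B(x)$ (recorded before Lemma~\ref{lem:ineq}) gives $\tau_{D_x}<-1<\tau_1$. Hence the quadruple $(\theta_D,\tau_{D_x},\tau_1,\theta_1)$ satisfies the hypotheses $a\leq b<c\leq d$ of Lemma~\ref{lem:ineq}, and evaluating its conclusion $f(t)\leq g(t)$ at the interior point $t=-1\in(b,c)$ yields $(\theta_1+1)(\theta_D+1)\leq(\tau_1+1)(\tau_{D_x}+1)\leq -\beta_1(x)$, which is the asserted bound at every $x$.

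For the equality case, suppose equality holds at every vertex $x$. Then at each $x$ both of the above inequalities are tight; the tightness of the Lemma~\ref{lem:ineq} step at the interior point $t=-1$ forces, via the equality clause of that lemma, $\theta_D=\tau_{D_x}$ and $\theta_1=\tau_1$, while the tightness of Theorem~\ref{thm:1}(1) forces $D_x=2$. Thus $\Gamma$ has diameter two, and $\theta_1=\tau_1$, $\theta_D=\tau_{D_x}$ combined with Lemma~\ref{lem:DRG}(2) make $\Gamma$ distance-regular around every vertex; a distance-regularised graph that is regular is distance-regular by \cite{GS} (using \cite{D} to discard the distance-biregular alternative), hence strongly regular. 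Conversely, if $\Gamma$ is strongly regular then $B(x)$ is the same $3\times 3$ equitable matrix for every $x$, so $F_2(x)=(x-\theta_1)(x-\theta_D)$; expanding $F_2$ from the two-term recursion and using $F_1(-1)=0$, $\gamma_1=1$ gives $F_2(-1)=-\beta_1(x)$, i.e. $(\theta_1+1)(\theta_D+1)=-\beta_1(x)$, so equality holds at every vertex.

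The main obstacle here is not the inequality itself but the equality-case bookkeeping: one must check carefully that $-1$ lies strictly inside $(\tau_{D_x},\tau_1)$ so that the sharp form of Lemma~\ref{lem:ineq} genuinely applies, that $D_x\geq 2$ at every vertex of a connected regular non-complete graph (otherwise the ``$D=2$'' conclusion of Theorem~\ref{thm:1}(1) is vacuous), and that being distance-regular around every vertex together with regularity upgrades all the way to ``strongly regular'' rather than merely ``diameter two and locally distance-regular'', which is where the cited structure theory of distance-regularised graphs is needed.
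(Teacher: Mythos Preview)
Your proposal is correct and follows essentially the same route as the paper: apply Theorem~\ref{thm:1}(1) to $B(x)$, then use Lemma~\ref{lem:ineq} with $(a,b,c,d)=(\theta_D,\tau_{D_x},\tau_1,\theta_1)$ at $t=-1$ to pass to the eigenvalues of $A$, and handle the equality case via Lemma~\ref{lem:DRG}(2) and the Godsil--Shawe-Taylor/Delorme structure theory. The only difference is that you are more explicit than the paper about why $-1\in(\tau_{D_x},\tau_1)$, why $D_x\geq 2$, and why distance-regularised plus regular forces distance-regular.
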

The above is a generalization of Koolen, Park and Yu's inequality \eqref{eq:kpy} for regular graphs.

Applying Theorem~\ref{thm:1} to distance-regular graphs of diameter at least three, we have the following:
\begin{theorem}\label{thm:31}
Let $\Gamma$ be a distance-regular graph of diameter $D\geq3$ 
and let $\theta_0=k>\theta_1>\cdots>\theta_D$ be the distinct eigenvalues of $\Gamma$.
If $b_2+c_3\geq k+1$ holds, then $(\theta_1+1)(\theta_{D-1}+1)(\theta_D+1)\geq -b_1(k+1-b_2-c_3)$.
If $b_2+c_3\leq k+1$ holds, then $(\theta_1+1)(\theta_{2}+1)(\theta_D+1)\leq -b_1(k+1-b_2-c_3)$. 
Moreover equality holds in either case if and only if $D=3$. 
\end{theorem}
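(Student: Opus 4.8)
The plan is to observe that Theorem~\ref{thm:31} is essentially a direct specialization of Theorem~\ref{thm:1}(2). First I would recall that for a distance-regular graph $\Gamma$ of diameter $D\geq 2$ with adjacency matrix $A$, the distance partition with respect to any vertex $x$ is equitable and its quotient matrix is exactly the intersection matrix $B=B_1$ with entries $\gamma_i(x)=c_i$, $\alpha_i(x)=a_i$, $\beta_i(x)=b_i$, which is independent of $x$; moreover $D_x=D$ for every $x$. Since $\Gamma$ is $k$-regular this quotient matrix satisfies condition~\eqref{eq:cond} with $\kappa=k$, and its eigenvalues are precisely the distinct eigenvalues $\theta_0=k>\theta_1>\cdots>\theta_D$ of $A$. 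In this dictionary $\beta_1=b_1$, $\beta_2=b_2$, $\gamma_3=c_3$, so $\beta_2+\gamma_3\geq\kappa+1$ becomes $b_2+c_3\geq k+1$ and $-\beta_1(\kappa+1-\beta_2-\gamma_3)$ becomes $-b_1(k+1-b_2-c_3)$.

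Having set up this correspondence, I would simply invoke Theorem~\ref{thm:1}(2) applied to $B$. In the case $b_2+c_3\geq k+1$ it yields $(\theta_1+1)(\theta_{D-1}+1)(\theta_D+1)\geq -b_1(k+1-b_2-c_3)$, and in the case $b_2+c_3\leq k+1$ it yields $(\theta_1+1)(\theta_2+1)(\theta_D+1)\leq -b_1(k+1-b_2-c_3)$, with equality in either case iff $D=3$. (If $b_2+c_3=k+1$ both inequalities hold, reducing to $F_3(-1)=0$, i.e. $-1$ is an eigenvalue, consistent with $D=3$ forcing equality.) This is the entire argument; the only thing to be careful about is confirming that the quotient matrix of the distance partition of a distance-regular graph genuinely coincides with $\tilde B$'s parent matrix $B$ of Section~2 and that its eigenvalues are the full spectrum of $A$ (not a proper subset), which is standard — see \cite[Ch.~4]{BCN} — since the distance-$i$ adjacency matrices $A_i$ span a commutative algebra containing $A$ and the partition is equitable with $D+1$ cells matching the $D+1$ eigenvalues.

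There is essentially no obstacle here beyond bookkeeping: Theorem~\ref{thm:31} is the ``honest'' distance-regular incarnation of the abstract tridiagonal statement, exactly as Theorem~\ref{thm:ineqgraph} was the regular-graph incarnation of Theorem~\ref{thm:1}(1). The one mild subtlety worth a sentence in the writeup is the equality analysis: for a genuine distance-regular graph of diameter $D$ one automatically has $D_x=D\geq 3$, so the ``$D=3$'' equality condition from Theorem~\ref{thm:1}(2) translates directly to ``$\Gamma$ has diameter three,'' with no interlacing step needed (unlike the regular-graph case, where Lemma~\ref{lem:DRG} and Lemma~\ref{lem:ineq} were invoked to pass from the quotient matrix's spectrum to $A$'s spectrum).

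\begin{proof}
Let $B$ be the intersection matrix $B_1$ of $\Gamma$, that is, the quotient matrix of the distance partition $\pi(x)$ with respect to any vertex $x$, whose entries are $\gamma_i=c_i$ for $1\leq i\leq D$, $\alpha_i=a_i$ for $0\leq i\leq D$, and $\beta_i=b_i$ for $0\leq i\leq D-1$, with $c_0=b_D=0$. Since $\Gamma$ is distance-regular, this matrix is independent of $x$ and $D_x=D$ for every $x\in V(\Gamma)$. As $\Gamma$ is $k$-regular we have $a_i+b_i+c_i=k$ for all $i$, together with $a_0=0$ and $c_1=1$, so $B$ is a $(D+1)\times(D+1)$ tridiagonal matrix satisfying \eqref{eq:cond} with $\kappa=k$. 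By \cite[Ch.~4]{BCN}, the eigenvalues of $B$ are exactly the distinct eigenvalues $\theta_0=k>\theta_1>\cdots>\theta_D$ of the adjacency matrix $A$ of $\Gamma$.

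Now apply Theorem~\ref{thm:1}(2) to $B$, using $\beta_1=b_1$, $\beta_2=b_2$, $\gamma_3=c_3$ and $\kappa=k$. If $b_2+c_3\geq k+1$, then
\begin{align*}
(\theta_1+1)(\theta_{D-1}+1)(\theta_D+1)\geq -b_1(k+1-b_2-c_3),
\end{align*}
and if $b_2+c_3\leq k+1$, then
\begin{align*}
(\theta_1+1)(\theta_{2}+1)(\theta_D+1)\leq -b_1(k+1-b_2-c_3).
\end{align*}
In either case equality holds if and only if $D=3$.
\end{proof}
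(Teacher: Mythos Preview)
Your proof is correct and matches the paper's approach exactly: the paper simply states that Theorem~\ref{thm:31} follows by applying Theorem~\ref{thm:1}(2) to the intersection matrix of a distance-regular graph, and your write-up supplies precisely that specialization with the appropriate identifications $\kappa=k$, $\beta_i=b_i$, $\gamma_i=c_i$.
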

%%%%%%%%%%%%%%%%%%%%%%%%%%%%%%%%%%%%%%%%%%%%%%%%%%%%%%%%%%%%%%%%%%%%%%%%%%%%%%%%%%%%%%%%%%%%%%%%
\section{Inequalities for dual eigenvalues of $Q$-polynomial association schemes}
The reader is referred to \cite{BI,MT} for the basic notations and information on $Q$-polynomial association schemes. 
Let $(X,\mathcal{R})$ be a $Q$-polynomial association scheme of class $D\geq2$.
Let $E_0,E_1,\ldots,E_D$ be the primitive idempotents of $(X,\mathcal{R})$.
Dual eigenvalues $\{\theta_h^*\}_{h=0}^D$ of $(X,\mathcal{R})$ are defined by 
$E_1=\tfrac{1}{|X|}\sum_{h=0}^D \theta_h^*A_h$.
We arrange the ordering of dual eigenvalues (i.e., the ordering of the adjacency matrices of the scheme) so that $\theta_0^*=m>\theta_1^*>\cdots>\theta_D^*$. 
We define the Krein parameters $q_{i,j}^h$ by $E_i\circ E_j=\frac{1}{|X|}\sum_{h=0}^D q_{i,j}^h E_h$, where $\circ$ denotes the entrywise product of matrices. 
By \cite[Theorem 4.1]{BI}, $B_1^*=(q_{1,j}^h)_{0\leq j,h\leq D}$ has the eigenvalues $\{\theta_h^*\}_{h=0}^D$.
We apply Theorem~\ref{thm:1} to the transpose of the tridiagonal matrix $B_1^*$ to obtain the following theorem.
\begin{theorem}\label{thm:41}
Let $(X,\mathcal{R})$ be a $Q$-polynomial association scheme of class $D\geq2$.
\begin{enumerate}
\item $(\theta_1^*+1)(\theta_D^*+1)\leq -b_1^*$ holds with equality if and only if $D=2$.
\item Assume that $D\geq 3$ holds.
If $b_2^*+c_3^*\geq m+1$ holds, then $(\theta_1^*+1)(\theta_{D-1}^*+1)(\theta_D^*+1)\geq -b_1^*(m+1-b_2^*-c_3^*)$.
If $b_2^*+c_3^*\leq m+1$ holds, then $(\theta_1^*+1)(\theta_{2}^*+1)(\theta_D^*+1)\leq -b_1^*(m+1-b_2^*-c_3^*)$. 
Moreover equality holds in either case if and only if $D=3$. 
\end{enumerate}
\end{theorem}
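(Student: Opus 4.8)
The plan is to apply Theorem~\ref{thm:1} verbatim to the matrix $B:=(B_1^*)^{T}$, so essentially all of the work is a translation of hypotheses. First I would recall the elementary identities governing the first Krein matrix. From $E_1\circ E_0=\frac{1}{|X|}E_1$ one reads off $q_{1,0}^{h}=\delta_{1,h}$, hence $q_{1,0}^{0}=0$ and $q_{1,0}^{1}=1$. Summing $E_1\circ E_j$ over $j$ and using $\sum_{j=0}^{D}E_j=I$ together with the fact that $E_1$ has constant diagonal entry $m/|X|$ (where $m=m_1=\theta_0^{*}$) gives $\sum_{j=0}^{D}q_{1,j}^{h}=m$ for every $h$. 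Writing the Krein array as $a_i^{*}=q_{1,i}^{i}$, $b_i^{*}=q_{1,i+1}^{i}$, $c_i^{*}=q_{1,i-1}^{i}$, the $Q$-polynomial hypothesis says precisely that $B_1^{*}$ is tridiagonal with $b_i^{*}>0$ for $0\le i\le D-1$ and $c_i^{*}>0$ for $1\le i\le D$. Transposing, $B=(B_1^{*})^{T}$ has $(i,i)$-entry $a_i^{*}$, superdiagonal entries $b_0^{*},\dots,b_{D-1}^{*}$ and subdiagonal entries $c_1^{*},\dots,c_D^{*}$; its row sums equal the column sums of $B_1^{*}$, namely $m$. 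Thus $B$ is a nonnegative tridiagonal $(D+1)\times(D+1)$ matrix with positive off-diagonal entries satisfying \eqref{eq:cond} with $\kappa=m$, $\alpha_i=a_i^{*}$, $\beta_i=b_i^{*}$, $\gamma_i=c_i^{*}$ (in particular $\alpha_0=a_0^{*}=q_{1,0}^{0}=0$ and $\gamma_1=c_1^{*}=q_{1,0}^{1}=1$).

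Next I would match the spectra: $B$ and $B_1^{*}$ are transposes, hence cospectral, and by \cite[Theorem~4.1]{BI} their common spectrum is $\{\theta_h^{*}\}_{h=0}^{D}$, which by our chosen ordering satisfies $\theta_0^{*}=m>\theta_1^{*}>\cdots>\theta_D^{*}$, matching the ordering $\theta_0=\kappa>\theta_1>\cdots>\theta_D$ in Theorem~\ref{thm:1}. Then Theorem~\ref{thm:1}(1) applied to $B$ gives $(\theta_1^{*}+1)(\theta_D^{*}+1)\le-\beta_1=-b_1^{*}$, with equality iff $D=2$; this is part (1). For part (2), the hypothesis $\beta_2+\gamma_3\ge\kappa+1$ of Theorem~\ref{thm:1}(2) becomes $b_2^{*}+c_3^{*}\ge m+1$ and yields $(\theta_1^{*}+1)(\theta_{D-1}^{*}+1)(\theta_D^{*}+1)\ge-\beta_1(\kappa+1-\beta_2-\gamma_3)=-b_1^{*}(m+1-b_2^{*}-c_3^{*})$; the complementary hypothesis $b_2^{*}+c_3^{*}\le m+1$ gives the companion inequality with $\theta_2^{*}$ replacing $\theta_{D-1}^{*}$; and in both cases equality holds iff $D=3$.

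The argument has no genuine obstacle; the one point demanding care is the index bookkeeping that forces the passage to the transpose. With the convention $B_1^{*}=(q_{1,j}^{h})_{0\le j,h\le D}$ (rows indexed by $j$), the diagonal of $B_1^{*}$ is already the sequence $a_i^{*}$, but $(B_1^{*})_{i,i+1}=q_{1,i}^{i+1}=c_{i+1}^{*}$ and $(B_1^{*})_{i,i-1}=q_{1,i}^{i-1}=b_{i-1}^{*}$, which is the reverse of what the conclusion of Theorem~\ref{thm:1}(2) (phrased via $\beta_2+\gamma_3$, i.e.\ $b_2^{*}+c_3^{*}$) requires; transposing restores the correct assignment $\beta_i=b_i^{*}$, $\gamma_i=c_i^{*}$. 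Once this dictionary is fixed, parts (1) and (2) of Theorem~\ref{thm:41} are immediate consequences of the corresponding parts of Theorem~\ref{thm:1}.
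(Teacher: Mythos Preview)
Your proposal is correct and follows exactly the paper's approach: the paper's entire argument is the single sentence ``We apply Theorem~\ref{thm:1} to the transpose of the tridiagonal matrix $B_1^*$,'' and you have carefully expanded the verification that $(B_1^*)^{T}$ satisfies condition~\eqref{eq:cond} with $\kappa=m$, $\beta_i=b_i^{*}$, $\gamma_i=c_i^{*}$ and has spectrum $\{\theta_h^{*}\}$. Your remark about why the transpose is needed (so that the superdiagonal carries the $b_i^{*}$ rather than the $c_{i+1}^{*}$) is a useful clarification that the paper leaves implicit.
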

\begin{remark}
In \cite{C}, Cameron and Goethals constructed $Q$-antipodal $Q$-polynomial association schemes of class $3$, which are known as linked systems of symmetric designs, satisfying $(\theta_1^*+1)(\theta_3^*+1)=-b_1^*\frac{f}{f-1}$ with $f=2^{2m-1}$ for any positive integer $m$.

Therefore the above inequality (1) cannot be improved for the case of class $3$.
These association schemes are formally dual to the examples mentioned in \cite[p.2409, Remark]{KPY}.
\end{remark}
%%%%%%%%%%%%%%%%%%%%%%%%%%%%%%%%%%%%%%%%%%%%%%%%%%%%%%%%%%%%%%%%%%%%%%%%%%%
\section{Tight distance-regular graphs and dual-tight $Q$-polynomial association schemes}\label{sec:tight}
In \cite{JKT}, Juri\v{s}i\'{c}, Koolen and Terwilliger showed the following ``fundamental bound" for distance-regular graphs:
\begin{align}\label{eq:tight}
\Big(\theta_1+\frac{k}{a_1+1}\Big)\Big(\theta_D+\frac{k}{a_1+1}\Big)\geq -\frac{k a_1 b_1}{(a_1+1)^2}.
\end{align}
The same inequality above is proven by Pascasio \cite{Pascasio2003DM} in character algebras.
Applying Pascasio's result to $Q$-polynomial association schemes, we have the following inequality:
\begin{align}\label{eq:dualtight}
\Big(\theta_1^*+\frac{m}{a_1^*+1}\Big)\Big(\theta_D^*+\frac{m}{a_1^*+1}\Big)\geq -\frac{m a_1^* b_1^*}{(a_1^*+1)^2}.
\end{align}
A distance-regular graph is called {\it tight} if it is nonbipartite and equality holds in \eqref{eq:tight}, and a $Q$-polynomial association scheme is called {\it dual-tight} if it is not $Q$-bipartite and equality holds in \eqref{eq:dualtight}.  
Juri\v{s}i\'{c} and Koolen showed a characterization for the case of diameter $3$ in \cite{JK}.

The following theorem is a dual to the above characterization.
Our method is based only on tridiagonal matrices obtained from polynomial association schemes, so it gives an alternative proof of \cite[Theorem 3.2]{JK} and a unifying proof for distance-regular graphs and $Q$-polynomial association schemes.  
\begin{theorem}\label{thm:51}
Let $(X,\mathcal{R})$ be a $Q$-polynomial scheme of class $3$.
Then $(X,\mathcal{R})$ is dual-tight if and only if $(X,R_i)$ is the incidence graph of a symmetric design for some $i\neq 0$.
\end{theorem}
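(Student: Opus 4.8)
The plan is to prove the two implications separately; in both directions the crux is to determine the Krein array of $(X,\mathcal{R})$. Recall from the paragraph preceding Theorem~\ref{thm:41} that the first Krein matrix $B_1^*$ (or rather its transpose) is a nonnegative tridiagonal matrix satisfying \eqref{eq:cond} with $\kappa=m$, $\gamma_1=1$, $\beta_i=b_i^*$, $\gamma_i=c_i^*$, whose eigenvalues are $\theta_0^*=m>\theta_1^*>\theta_2^*>\theta_3^*$; in particular \eqref{eq:cond} gives the identity $a_1^*+1=m-b_1^*$, and the cubic $F_3(x)=\prod_{j=1}^{3}(x-\theta_j^*)$ of Section~2 has coefficients that are explicit polynomials in $m,b_1^*,b_2^*,c_2^*,c_3^*$ through the recursion for the $F_i$. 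Write $\psi=\tfrac{m}{a_1^*+1}=\tfrac{m}{m-b_1^*}$, so that dual-tightness is exactly the equality $(\theta_1^*+\psi)(\theta_3^*+\psi)=-\tfrac{m\,a_1^*\,b_1^*}{(a_1^*+1)^2}$ together with the hypothesis that $(X,\mathcal{R})$ is not $Q$-bipartite.

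For the direction ``incidence graph $\Rightarrow$ dual-tight'': if $(X,R_i)$ is the incidence graph of a symmetric $2$-$(v,k,\lambda)$ design, then it is a bipartite distance-regular graph of diameter three with intersection array $\{k,k-1,k-\lambda;1,\lambda,k\}$, so $(X,\mathcal{R})$ is its association scheme and is $Q$-polynomial in the natural ordering. Computing the primitive idempotents explicitly yields $m=v-1$, $\theta_2^*=-1$, $\theta_1^*=(v-1)\sqrt{k-\lambda}/k$ and $\theta_3^*=-(v-1)\sqrt{k-\lambda}/(v-k)$; using the design identity $k(k-1)=(v-1)\lambda$ one gets $\theta_1^*\theta_3^*=-m$, and one checks that the Krein array is $\{m,\mu^*,1;1,\mu^*,m\}$ with $\mu^*=b_1^*=c_2^*$, so that $a_1^*=m-1-\mu^*\neq 0$ (for a nontrivial design) and the scheme is not $Q$-bipartite. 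Substituting these values into the displayed equality verifies dual-tightness. This direction is a finite computation.

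For the converse, assume $(X,\mathcal{R})$ is dual-tight. Writing $(\theta_1^*+\psi)(\theta_3^*+\psi)=\psi^2+\psi(s_1-\theta_2^*)+s_3/\theta_2^*$, where $s_1,s_2,s_3$ are the elementary symmetric functions of $\theta_1^*,\theta_2^*,\theta_3^*$ and hence explicit in the Krein parameters, the dual-tightness equality turns into a quadratic equation for $\theta_2^*$ with coefficients explicit in $m,b_1^*,b_2^*,c_2^*,c_3^*$; eliminating $\theta_2^*$ against $F_3(\theta_2^*)=0$, and invoking the sign analysis of $F_3(-1)$ underlying Theorem~\ref{thm:41}(2) (splitting according to the sign of $m+1-b_2^*-c_3^*$), should force $\theta_2^*=-1$, i.e.\ $b_2^*+c_3^*=m+1$; feeding this back, together with the consistency conditions that the $\theta_j^*$, the valencies and the multiplicities actually come from a scheme, should pin the remaining Krein parameters to $b_2^*=c_1^*=1$, $c_3^*=b_0^*=m$ and $c_2^*=b_1^*=:\mu^*$. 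Thus the Krein array must be $\{m,\mu^*,1;1,\mu^*,m\}$, so $(X,\mathcal{R})$ is $Q$-antipodal of class three; one then identifies a relation $R_i$ of $(X,\mathcal{R})$ as a bipartite distance-regular graph of diameter three — this is where the interlacing/distance-regularity criterion of Lemma~\ref{lem:DRG} enters, now applied on the primal side — and invokes the classical fact that such graphs are precisely incidence graphs of symmetric $2$-designs.

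The main obstacle is the heart of the converse: extracting from the single scalar identity \eqref{eq:dualtight}, together with the tridiagonal relations and the consistency conditions for an actual association scheme, enough constraints on the four free Krein parameters $b_1^*,b_2^*,c_2^*,c_3^*$ to reconstruct the entire Krein array, while keeping the trichotomy governed by the sign of $m+1-b_2^*-c_3^*$ under control in the spirit of the proof of Theorem~\ref{thm:1}(2). A secondary difficulty is the final structural step, passing from the Krein array $\{m,\mu^*,1;1,\mu^*,m\}$ to the conclusion that some relation of $(X,\mathcal{R})$ is genuinely the incidence graph of a symmetric design, which relies on the description of $Q$-antipodal association schemes of class three together with Lemma~\ref{lem:DRG}.
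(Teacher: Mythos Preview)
Your forward direction (incidence graph $\Rightarrow$ dual-tight) is fine and matches the paper's ``straightforward calculation''.

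The converse, however, has a genuine gap: the scheme you outline cannot succeed with the ingredients you list. For a class-$3$ scheme the equality case of Theorem~\ref{thm:41}(2) is automatic --- it simply restates $F_3(-1)=-b_1^*(m+1-b_2^*-c_3^*)$, which is an identity, not a constraint --- so the ``sign analysis of $F_3(-1)$'' gives you no new information and cannot force $\theta_2^*=-1$. Together with Vieta's relations from $F_3$, the single scalar dual-tightness identity~\eqref{eq:dualtight} is far from enough to pin down four free Krein parameters; the ``consistency conditions'' you appeal to remain unspecified, and this is precisely where the proof lives.

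The paper's argument imports three additional facts you do not mention. First, dual-tight $Q$-polynomial schemes satisfy $a_D^*=0$ (the dual of \cite[Theorem~10.4]{JKT}; see \cite{T}), so here $c_3^*=m$ and one parameter disappears. Second, Pascasio's identity $\theta_1^*\theta_3^*=m\,\theta_2^*$ for dual-tight schemes \cite{Pascasio1999JAC,Pascasio2003DM} gives a further relation, yielding $\theta_2^{*2}=b_2^*$. Third --- and this is the decisive step --- the nonnegativity of the Krein parameters $q_{2,3}^3$ and $q_{3,3}^3$ is used to sandwich $b_2^*(m-b_1^*)$ between $m-c_2^*$ and itself, forcing $b_2^*=1$ and $b_1^*=c_2^*$. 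None of these three inputs is derivable from your elimination-and-interlacing plan.

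For the final structural step, the paper does not use Lemma~\ref{lem:DRG} at all; once the Krein array is $\{m,\mu^*,1;1,\mu^*,m\}$ with $b_1^*=c_2^*$, it cites van~Dam's classification \cite[Theorem~5.8]{Dam} of $Q$-antipodal $3$-class schemes as linked systems of symmetric designs, and the extra equality $b_1^*=c_2^*$ singles out the incidence graph of a single symmetric design. Your proposed route through Lemma~\ref{lem:DRG} on the primal side would require separately establishing the eigenvalues and quotient matrix of the candidate relation graph, which you have not set up.
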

\begin{proof}
Suppose $(X,\mathcal{R})$ is dual-tight.
Then it follows that $a_3^*=0$; cf.\ \cite{T}\footnote{Tight distance-regular graphs of diameter $D$ satisfy $a_D=0$ \cite[Theorem~10.4]{JKT}. Dualizing the proof, we can show that dual-tight $Q$-polynomial association schemes satisfy $a_D^*=0$. See \cite{T} for the details.}.
Therefore the Krein matrix $B_1^*$ is 
$$B_1^*=\left(\begin{array}{cccc}
 0 & 1 & 0 & 0 \\
 m & m-b_1^*-1 & c_2^* & 0 \\
 0 & b_1^* & m-b_2^*-c_2^* & m \\
 0 & 0 & b_2^* & 0
 \end{array}\right). $$
 The characteristic polynomial $\phi(x)$ of $B_1^*$  is
$$\phi(x)=(x-m)(x^3+(-m+b_1^*+b_2^*+c_2^*+1)x^2+(b_1^*b_2^*+b_2^*+c_2^*-m b_2^*-m)x-m b_2^*) .$$
So we obtain 
\begin{align}
\theta_1^*+\theta_2^*+\theta_3^*&=m-b_1^*-b_2^*-c_2^*-1,\label{eq:51}\\
\theta_1^*\theta_2^*\theta_3^*&=mb_2^*.\label{eq:52}
\end{align}
Pascasio \cite{Pascasio1999JAC,Pascasio2003DM} showed that 
\begin{align}
\theta_1^*\theta_3^*=m\theta_2^*. \label{eq:53} 
\end{align}
Substituting (\ref{eq:53})  in (\ref{eq:52}), we have 
\begin{align}
{\theta_2^*}^2=b_2^* \label{eq:54}.
\end{align}
By the definition of dual-tightness, we have
\begin{align}
\theta_1^*\theta_3^*+\frac{m}{m-b_1^*}(\theta_1^*+\theta_3^*)=-\frac{m(b_1^*+1)}{m-b_1^*}. \label{eq:55}
\end{align} 
Substituting (\ref{eq:51}) and (\ref{eq:53}) in (\ref{eq:55}), we have by $a_1^*\neq0$
\begin{align}
\theta_2^*=-\frac{m-b_2^*-c_2^*}{m-b_1^*-1}. \label{eq:56}
\end{align}
Comparing the coefficient of $E_3$ in $E_1\circ E_1 \circ E_3$ in two ways and using $a_3^*=0$, 
we get $q_{2,3}^3=\frac{m(b_2^*-1)}{c_2^*}$.
Since this is a nonnegative real number, we have $b_2^*\geq 1$ and 
\begin{align}
{b_2^*}^2\geq b_2^*. \label{ineq:51} 
\end{align}
From (\ref{eq:54}), (\ref{eq:56}) and (\ref{ineq:51}), we have 
${b_2^*}^2\geq \frac{(m-b_2^*-c_2^*)^2}{(m-b_1^*-1)^2}$.
Hence $b_2^*(m-b_1^*-1)\geq m-b_2^*-c_2^*$, and we have
\begin{align}
b_2^*(m-b_1^*)\geq m-c_2^*, \label{ineq:52} 
\end{align}
with equality if and only if $b_2^*=1$.

Here, because $a_3^*=0$ and $q_{2,3}^3=\frac{m(b_2^*-1)}{c_2^*}$, we obtain
\begin{align*}
\frac{b_1^*b_2^*}{c_2^*}=m_3=q_{0,3}^3+q_{1,3}^3+q_{2,3}^3+q_{3,3}^3\geq 1+\frac{m(b_2^*-1)}{c_2^*},
\end{align*}
i.e.,
\begin{align}
m-c_2^*\geq b_2^*(m-b_1^*), \label{ineq:53} 
\end{align}
with equality if and only if $q_{3,3}^3=0$. 

By (\ref{ineq:52}) and (\ref{ineq:53}), $m-c_2^*=b_2^*(m-b_1^*)$ and $b_2^*=1$ hold.
Therefore $(X,\mathcal{R})$ is $Q$-antipodal i.e., $(X,\mathcal{R})$ is a linked systems of symmetric design \cite[Theorem 5.8]{Dam}.
And we obtain  $b_1^*=c_2^*$, so $(X,R_i)$ is the incidence graph of a symmetric design for some $i\neq 0$.

The converse follows from a straightforward calculation. 
\end{proof}
\section*{Acknowledgements}
A part of this work was done when the author stayed at POSTECH from August to September in $2009$ under the support of Japan International Science and Technology Exchange Center.
The author would like to thank Jack Koolen for drawing author's attention to this research and Hajime Tanaka for helpful discussions.
%%%%%%%%%%%%%%%%%%%%%%%%%%%%%%%%%%%%%%%%%%%%%%%%%%%%%%%%%%%%%%%%%%%%%%%%%%%%%%%%%%%%%%%%%%%%%%%%%%%%%%%%%%%%%%%%%%%%%%%%%%%%%%%%%%%%%%%%%%%%%%%%%%%%%%%%%%%%%%%%%%%%%%%%%%%%%%%%%%%%%%%%%%%%%%%%%%%%%%%%%%%%%%%%%%%%%%
%%%%%%%%%%%%%%%%%%%%%%%%%%%%%%%%%%%%%%%%%%%%%%%%%%%%%%%%%%%%%%%%%%%%%%%%%%%%%%%%%%%%%%%%%%%%%%%%%%%%%%%%%%%%%%%%%%%%%%%%%%%%%%%%%%%%%%%%%%%%%%%%%%%%%%%%%%%%%%%%%%%%%%%%%%%%%%%%%%%%%%%%%%%%%%%%%%%%%%%%%%%%%%%%%%%%%%

\end{document}